\newtheorem{question}{Question}[section]
\newtheorem{theorem}[question]{Theorem}
\newtheorem{lemma}[question]{Lemma}
\newtheorem{definition}[question]{Definition}
\newtheorem{proposition}[question]{Proposition}
\newtheorem{example}[question]{Example}
\title{Countably compact weakly Whyburn spaces}
\author{Santi Spadaro}
\address{Instituto de Matematica e Estatistica (IME-USP) \\ Universidade de Sao Paulo \\ Rua do Matao, 1010 - Cidade Universitaria \\ 05508-090 Sao Paulo - SP \\ Brazil}
\email{santidspadaro@gmail.com}
\subjclass[2000]{Primary: 54A20, 54A25; Secondary: 54G10}
\keywords{weak Whyburn property, convergence, Lindel\"of $P$-space, Urysohn, countably compact, pseudoradial}
\begin{document}

\begin{abstract}
The weak Whyburn property is a generalization of the classical sequential property that was studied by many authors. A space $X$ is weakly Whyburn if for every non-closed set $A \subset X$ there is a subset $B \subset A$ such that $\overline{B} \setminus A$ is a singleton. We prove that every countably compact Urysohn space of cardinality smaller than the continuum is weakly Whyburn and show that, consistently, the Urysohn assumption is essential. We also give conditions for a (countably compact) weakly Whyburn space to be pseudoradial and construct a countably compact weakly Whyburn non-pseudoradial regular space, which solves a question asked by Bella in private communication. 
\end{abstract}

\maketitle

\section{Introduction}

The letter $X$ denotes a Hausdorff topological space. The weak Whyburn property is a natural refinement of the sequential property that relies on the following generalization of convergent sequence.

\begin{definition}
Given a space $X$, we say that $A \subset X$ is \emph{an almost closed set converging to $x$} if $\overline{A} \setminus A=\{x\}$.
\end{definition}

\begin{definition}
A space $X$ is called weakly Whyburn if for every non-closed set $A \subset X$ there is an almost closed set $B \subset A$ which converges to a point outside of $A$.
\end{definition}

Weakly Whyburn spaces were introduced by Simon \cite{S1} under the name of WAP spaces.

Clearly, every sequential space is weakly Whyburn. Let $p \in \beta \omega \setminus \omega$. Then the space $\omega \cup \{p\}$ with the topology induced from the \v{C}ech-Stone compactification of the integers $\beta \omega$ shows that the previous implication cannot be reversed. Bella \cite{B} noted that every compact weakly Whyburn space is sequentially compact and satisfies the following weakening of sequentiality.

Recall that a transfinite sequence $\{x_\alpha: \alpha < \kappa \} \subset X$ (where $\kappa$ is a cardinal) is said to converge to a point $x \in X$ if for every open neighbourhood $U$ of $x$ there is an ordinal $\beta$ such that $\{x_\alpha: \alpha > \beta \} \subset U$.

\begin{definition}
A space $X$ is called \emph{pseudoradial} if for every non-closed set $A \subset X$, there is a point $x \in \overline{A} \setminus A$ and a transfinite sequence $\{x_\alpha: \alpha < \kappa\} \subset A$ such that $\{x_\alpha: \alpha < \kappa \}$ converges to $x$.
\end{definition}

Bella \cite{B} proved that every compact weakly Whyburn space is pseudoradial and Dow \cite{D} constructed under $\diamondsuit$ an example of a compact pseudoradial non-weakly Whyburn space. The existence of such a space within the usual axioms of ZFC is still an open problem. Recall that a $P$-space is a topological space where every $G_\delta$ set is open. The behavior of Lindel\"of $P$-spaces is closer to that of compact spaces than that of general Lindel\"of spaces. For example, a countable product of Lindel\"of $P$-spaces is Lindel\"of and every Lindel\"of $P$-space is normal. In \cite{BCS} we proved a version of Bella's result for Lindel\"of $P$-spaces: every Lindel\"of weakly Whyburn $P$-space such that $\psi(X) < \aleph_\omega$ is pseudoradial. It is still an open question whether there exists a Lindel\"of weakly Whyburn non-pseudoradial $P$-space. 

It would be interesting to know whether the Lindel\"of property can be weakened to countable extent (that is, every closed discrete set is countable) in our result. We prove that this is the case for spaces of character at most $\omega_2$. We also construct an example of a countably compact weakly Whyburn regular space which is not pseudoradial, thus showing that compactness cannot be weakened to countable compactness in Bella's result. This answers a question asked by Bella in private communication.

In Section 2 we prove that every countably compact Urysohn  space of cardinality smaller than the continuum is weakly Whyburn and show that the Urysohn assumption is essential by constructing a countably compact Hausdorff non-weakly Whyburn space of cardinality $\omega_1$.

In our proofs we will sometimes use elementary submodels of the structure $(H(\mu), \epsilon)$. Dow's survey \cite{Des} is enough to read our paper, and we give a brief informal refresher here. Recall that $H(\mu)$ is the set of all sets whose transitive closure has cardinality smaller than $\mu$. When $\mu$ is regular, $H(\mu)$ is known to satisfy all axioms of ZFC, except for the power set axiom, but as long as $\mu$ is large enough to contain everything we need in our proof, this will not be a problem. We say, informally, that a formula is satisfied by a set $S$ if it is true when all bounded quantifiers are restricted to $S$. A set $M \subset H(\mu)$ is said to be an elementary submodel of $H(\mu)$ (and we write $M \prec H(\mu)$) whenever a formula with parameters in $M$ is satisfied by $H(\mu)$ if and only if it is satisfied by $M$. 

The downward Lowenheim-Skolem theorem guarantees that for every $S \subset H(\mu)$, there is an elementary submodel $M \prec H(\mu)$ such that $S \subset M$ and $|M| \leq |S| \cdot \omega$. This theorem is sufficient for many applications, but it is often useful for $M$ to satisfy some kind of closure property. For example, $M$ is said to be \emph{$\omega$-covering} if for every $A \in [M]^\omega$ there is $B \in M$ such that $A \subset B$ and $|B| \leq \aleph_0$. Given a large enough regular $\mu$ and $S \in [H(\mu)]^\omega$ there is $M \prec H(\mu)$ such that $S \subset M$, $M$ is $\omega$-covering and $|M|=\aleph_1$.

The following theorem is also used often: let $M$ be an elementary submodel of a large enough $H(\mu)$ such that $\kappa + 1 \subset M$ and let $S$ be a $\kappa$-sized element of $M$. Then $S$ is a subset of $M$ (in particular, every countable element of an infinite elementary submodel is always a subset of it).

All undefined notions can be found in \cite{E} for topology and \cite{Ku} for set theory.

\section{When is a small countably compact space weakly Whyburn?}

Recall that a space is Urysohn if for every pair of points $x \neq y$ there are open neighbourhoods $U$ of $x$ and $V$ of $y$ such that $\overline{U} \cap \overline{V}=\emptyset$.

\begin{theorem} \label{mainwhyburn}
Every countably compact Urysohn space of cardinality $<2^{\aleph_0}$ is weakly Whyburn.
\end{theorem}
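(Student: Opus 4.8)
The plan is to argue by contradiction, exploiting the cardinality bound through a Cantor-scheme (binary tree) construction. First I would record the convenient reformulation that the two shapes of the weak Whyburn property coincide: if $B \subset A$ satisfies $|\overline{B} \setminus A| = 1$, say $\overline{B} \setminus A = \{x\}$, then $C := \overline{B} \cap A$ is almost closed and converges to $x \notin A$, since $B \subset C \subset A$ forces $x \in \overline{B} \subset \overline{C} \subset \overline{B} = C \cup \{x\}$, whence $\overline{C} \setminus C = \{x\}$. Thus it suffices to produce, for each non-closed $A$, a set $B \subset A$ with $|\overline{B} \setminus A| = 1$.

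Assume no such $B$ exists. The key observation is then a splitting principle: for every open set $V$ with $\overline{V \cap A} \setminus A \neq \emptyset$, the set $\overline{V \cap A} \setminus A$ must in fact have at least two points, because taking $B = V \cap A$ in the failure hypothesis rules out $|\overline{V \cap A} \setminus A| = 1$. I would use this together with the Urysohn property to build open sets $\{U_s : s \in 2^{<\omega}\}$ by recursion on the length of $s$ so that (i) $U_{s^\frown i} \subset U_s$, (ii) $\overline{U_{s^\frown 0}} \cap \overline{U_{s^\frown 1}} = \emptyset$, and (iii) $\overline{U_s \cap A} \setminus A$ has at least two points. Start with $U_\emptyset = X$, where (iii) holds since $\overline{A} \setminus A$ is nonempty and, by failure, not a singleton. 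At a node $s$ satisfying (iii), pick distinct $q_0, q_1 \in \overline{U_s \cap A} \setminus A$, separate them by open sets $W_0 \ni q_0$ and $W_1 \ni q_1$ with $\overline{W_0} \cap \overline{W_1} = \emptyset$ (Urysohn), and set $U_{s^\frown i} = U_s \cap W_i$. A short check gives $q_i \in \overline{U_{s^\frown i} \cap A} \setminus A$, so this set is nonempty, and the splitting principle upgrades it to at least two points, which continues the recursion.

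Finally I would harvest the contradiction. Fix a branch $\sigma \in 2^\omega$. The sets $\overline{U_{\sigma \restriction n} \cap A}$ are closed, nonempty (each meets $A$ by (iii)), and decreasing in $n$, so countable compactness yields a point $p_\sigma \in \bigcap_{n < \omega} \overline{U_{\sigma \restriction n} \cap A}$. If $\sigma \neq \tau$ split at level $n$, then $p_\sigma$ and $p_\tau$ lie respectively in $\overline{W_0}$ and $\overline{W_1}$ for the separating sets chosen at that node, and these are disjoint by (ii); hence $\sigma \mapsto p_\sigma$ is injective and $|X| \geq 2^{\aleph_0}$, contradicting the hypothesis.

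The main obstacle is the splitting principle in the second step, that is, ensuring the recursion never dies because each relevant neighbourhood carries at least two accumulation points outside $A$. This is exactly where the failure hypothesis is consumed and where the precise form of the weak Whyburn property matters; the reformulation in the first paragraph is what makes the ``$\neq 1$, hence $\geq 2$'' dichotomy clean. The remaining ingredients are routine: the Urysohn property supplies the disjoint closures needed to separate branches, countable compactness supplies the branch points, and the cardinality bound delivers the contradiction. Note that no elementary submodels are needed for this argument.
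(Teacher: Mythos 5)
Your proof is correct and takes essentially the same approach as the paper: a Cantor scheme indexed by $2^{<\omega}$, with the Urysohn property separating two points of $\overline{U_s \cap A}\setminus A$ at each node, countable compactness supplying a distinct branch point for each $\sigma \in 2^\omega$, and hence $|X| \geq 2^{\aleph_0}$. Your explicit ``splitting principle'' (via the reformulation $C = \overline{B}\cap A$) even fills in a detail the paper leaves implicit, namely why $\overline{A_f}\setminus A$ must contain at least two points when no almost closed subset of $A$ converges outside $A$.
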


\begin{proof}
Let $X$ be a countably compact Urysohn space. Suppose that $X$ is not weakly Whyburn. Then there is a non-closed set $A \subset X$ such that no almost closed set converges outside of it.

Set $A_\emptyset=A$. Suppose that for some $n<\omega$ we have constructed subsets $\{A_f: f \in 2^{\leq n}\}$ of $A$ such that:

\begin{enumerate}
\item $\overline{A_f} \setminus A \neq \emptyset$, for every $f \in 2^{\leq n}$.
\item For every $i \leq n$ and for every $f,g \in 2^i$ such that $f \neq g$, we have $\overline{A_f} \cap \overline{A_g}=\emptyset$.
\item $A_f \subset A_g$, whenever $f \supset g$.
\end{enumerate}

Given $f \in 2^n$, let $x_{f^\frown 0}, x_{f^\frown 1}$ be distinct points such that $x_{f^\frown 0}, x_{f^\frown 1} \in \overline{A_f} \setminus A$. Let $U_{f^\frown 0}$ and $U_{f^\frown 1}$ be neighbourhoods of $x_{f^\frown 0}$ and $x_{f^\frown 1}$ respectively such that $\overline{U_{f^\frown 0}} \cap \overline{U_{f^\frown 1}}=\emptyset$ and let $A_{f^\frown 0}=U_{f^\frown 0} \cap A_f$ and $A_{f^\frown 1}=U_{f^\frown 1} \cap A_f$.

Let $\{A_f: f \in 2^{<\omega}\}$ be the family obtained at the end of the induction. For every $f \in 2^\omega$, use countable compactness to pick a point $x_f \in \bigcap\{\overline{A_{f \upharpoonright n}}: n < \omega \}$.

We claim that the map $\phi: 2^\omega \to X$ defined by $\phi(f) = x_f$ is one-to-one and hence $|X| \geq 2^{\aleph_0}$. Indeed, if $f \neq g$ then there is $n<\omega$ such that $f \upharpoonright n \neq g \upharpoonright n$. Now $x_f \in \overline{A_{f \upharpoonright n}}$ and $x_g \in \overline{A_{g \upharpoonright n}}$ and $\overline{A_{f \upharpoonright n}} \cap \overline{A_{g \upharpoonright n}}=\emptyset$, which implies that $x_f \neq x_g$.
\end{proof}

The above theorem cannot be extended to Hausdorff spaces, unless CH holds. The counterexample is a variation on a space from \cite{BvDMW}. First of all, we need the following lemma. 

Recall that a weak $P$-space is a space where countable sets are closed.

\begin{lemma} \label{lemexample}
There is a regular weak $P$-space $(Z, \rho)$ that is not weakly Whyburn.
\end{lemma}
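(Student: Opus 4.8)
The plan is to construct a regular weak $P$-space that fails the weak Whyburn property, following the variation on the space from \cite{BvDMW} alluded to in the text. The goal is to produce a non-closed set $A$ all of whose subsets either are closed or have more than one point in the closure outside $A$. The natural strategy is to build a space on an underlying set of the form $\omega_1 \times (\omega+1)$ or a similar product-like structure, where the "top" points of many convergent configurations are glued together so that any subset of a distinguished non-closed set drags in at least two limit points at once.

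First I would fix the underlying set and topology so that the weak $P$-property is automatic: declare every countable set to be closed by making points of some uncountable ``spine'' have neighbourhoods whose complements swallow any prescribed countable set, while keeping the remaining points isolated (or locally of small character). Concretely, I expect to take $Z$ to consist of an uncountable family of almost disjoint-like columns together with limit points chosen so that each column converges, and then identify or entangle these limits in pairs. Regularity would be verified by exhibiting, around each non-isolated point, a neighbourhood base of clopen sets; since the isolated points cause no trouble, the work is concentrated at the spine points, where the weak $P$ structure and the almost-disjointness of the columns should give clopen neighbourhoods separating a point from any closed set missing it.

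The heart of the argument is choosing the witnessing non-closed set $A$ and showing no $B \subset A$ is almost closed converging outside $A$. The idea is to arrange a pairing on the limit points so that for every infinite $B \subset A$, the closure $\overline{B}$ meets the ``outside'' in at least two points simultaneously: whenever $B$ accumulates at one glued limit, the entanglement forces it to accumulate at its partner as well, so $\overline{B} \setminus A$ can never be a singleton; and whenever $B$ fails to accumulate anywhere new, $B$ is already closed. Thus $A$ is non-closed (it omits these limit points) yet admits no almost closed converging subset, contradicting the weak Whyburn property. I would verify the weak $P$-property by checking that any countable subset of $Z$ lies inside finitely many columns plus a countable set of isolated points, whose union is closed by construction.

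The main obstacle I anticipate is making the \emph{entanglement} of limit points compatible with both regularity and the weak $P$-property at the same time: gluing two limits of disjoint columns can destroy Hausdorffness or regularity, while keeping them separated can let a clever $B \subset A$ converge to just one of them. The delicate balance is to couple the columns tightly enough that no subset can separate the paired limits in its closure, yet loosely enough that each individual spine point still has a clopen neighbourhood base. I expect this to require a careful almost-disjoint family (or an independent family of size continuum, explaining the role of \textbf{CH} in the surrounding discussion) indexing the columns, together with a precise rule assigning to each point of $A$ a pair of limits so that the accumulation behaviour of arbitrary infinite subsets is forced to be symmetric.
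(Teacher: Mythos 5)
Your proposal is a strategy outline rather than a proof: no space is actually defined, and all three verifications (regularity, the weak $P$-property, failure of weak Whyburn) are deferred to constructions described only by their intended effect. Worse, the one skeleton you do commit to is inconsistent with the goal. In a weak $P$-space every countable set is closed, so there are \emph{no} nontrivial convergent sequences at all; hence an underlying set like $\omega_1 \times (\omega+1)$ in which ``each column converges'' to a top point is impossible --- each column would be a countable non-closed set. Any viable construction must produce accumulation without convergence along countable sets. Moreover, your central mechanism --- a pairing of limit points such that every $B \subset A$ accumulating at one member of a pair also accumulates at its partner --- is exactly the part you admit you cannot implement, and it is the crux: with only two entangled limits one must rule out every subset of $A$ separating them in its closure, and nothing in your outline does so. Finally, CH is a red herring: the lemma is a ZFC construction (the CH remark in the paper concerns how the subsequent countably compact example relates to Theorem \ref{mainwhyburn}, not this lemma).

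The paper's proof uses a different and stronger mechanism, which is worth comparing with your plan. Let $X=\sigma(2^{\omega_1})$ with the topology generated by the $G_\delta$ subsets of $2^{\omega_1}$: a regular $P$-space with no isolated points, so countable sets are closed and nowhere dense. Let $X'$ be a disjoint copy of $X$ via a bijection $F$, declare all points of $X'$ isolated, and give $x \in X$ the basic neighbourhoods $U \cup F(U) \setminus F(N)$, where $U$ is an $X$-neighbourhood of $x$ and $N$ ranges over nowhere dense subsets of $X$. Subtracting $F(N)$ is what replaces your ``entanglement'': a set $A \subset X'$ clusters at $x \in X$ if and only if $F^{-1}(A) \setminus N$ clusters at $x$ for every nowhere dense $N$, so if $\overline{A} \setminus X' \neq \emptyset$ then $F^{-1}(A)$ is somewhere dense in $X$, and then an entire nonempty open set $U \subset \overline{F^{-1}(A)}$ lies inside $\overline{A} \setminus A$. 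Since $X$ is crowded, $U$ is infinite, so no subset of the non-closed set $X'$ is almost closed with limit outside $X'$: instead of forcing \emph{two} outside limit points, the construction forces an open set's worth of them. The weak $P$-property also falls out of this frame: for countable $C \subset Z$, the set $F^{-1}(C \cap X')$ is nowhere dense, so it may be deleted from basic neighbourhoods, and $C \cap X$ is avoided using that $X$ is a $P$-space. If you want to salvage your approach, this is the shape to aim for: replace the pairing by a notion of smallness (nowhere dense sets) that neighbourhoods of the non-isolated points are allowed to discard.
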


\begin{proof}
Let $X=\sigma(2^{\omega_1})=\{f \in 2^{\omega_1}: |f^{-1}(1)|<\aleph_0 \}$, with the topology induced from the topology generated by the $G_\delta$ subsets of $2^{\omega_1}$. Let $X'$ be a disjoint copy of $X$ and define a topology $\rho$ on $Z=X \cup X'$ as follows. Fix a bijection $F: X \to X'$. We declare all points of $X'$ to be isolated and a basic neighbourhood of a point $x \in X$ to be of the form $U \cup F(U) \setminus F(N)$, where $U$ is a neighbourhood of $x$ in $X$ and $N$ is a nowhere dense subset of $X$.

\noindent {\bf Claim 1}. $(Z, \rho)$ is a weak $P$-space.

\begin{proof}[Proof of Claim 1] Let $C \subset Z$ be a countable set. Since $X$ is a $P$-space, the set $N:=F^{-1}(C \cap X')$ is nowhere dense.

Let $x \notin C$. If $x \in X'$, then $\{x\}$ is a neighbourhood of $x$ disjoint from $C$, while if $x \in X$ then, since $X$ is a $P$-space, there is an open set $U \subset X$ (with its usual topology) such that $U \cap (C \cap X)=\emptyset$. Therefore $U \cup F(U) \setminus F(N)$ is an open neighbourhood of $x$ disjoint from $C$.
\renewcommand{\qedsymbol}{$\triangle$}
\end{proof}

\noindent {\bf Claim 2}. $(Z, \rho)$ is not weakly Whyburn. 

\begin{proof}[Proof of Claim 2] We claim that no almost closed set converges outside of $X'$. Indeed, let $A \subset X'$ be such that $\overline{A} \setminus X' \neq \emptyset$. Then $F^{-1}(A)$ is somewhere dense in $X$, and hence we can find a non-empty open set $U$ lying inside $\overline{F^{-1}(A)}$. But then $U \subset \overline{A} \setminus A$. Since $X$ does not have any isolated points, the set $U$ has infinite cardinality, and that implies that $A$ is not an almost closed set.
\renewcommand{\qedsymbol}{$\triangle$}
\end{proof}
\end{proof}

\begin{theorem}
There is a countably compact space of cardinality $\aleph_1$ which is not weakly Whyburn.
\end{theorem}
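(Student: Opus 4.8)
The plan is to build the required space as a countably compact refinement of the construction in Lemma \ref{lemexample}. The space $(Z,\rho)$ produced there is regular, has cardinality $\aleph_1$, and fails the weak Whyburn property, so at first sight it is almost what we want; the trouble is that, being a weak $P$-space, it contains infinite closed discrete sets (for instance any countably infinite subset of $X'$) and is therefore as far from countably compact as possible. Thus the task is to reproduce the obstruction to weak Whyburn over a base that is countably compact rather than a $P$-space, and to add limit points to the new points in a way that secures countable compactness without ever creating an almost closed subset of the witnessing set. Since the target cardinality is exactly $\aleph_1$, the control on cardinality and on countable compactness will both come from an $\omega$-covering elementary submodel, exactly the object set up in the introduction.

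First I would replace the $P$-space $\sigma(2^{\omega_1})$ by a countably compact crowded base of size $\aleph_1$. Take $2^{\omega_1}$ with its \emph{usual} product topology and the $\Sigma$-product $\Sigma(2^{\omega_1})=\{f\in 2^{\omega_1}:|f^{-1}(1)|\le\aleph_0\}$, which is countably compact and dense-in-itself. To cut it down to size $\aleph_1$ without losing countable compactness, pick an $\omega$-covering elementary submodel $M\prec H(\mu)$ with $|M|=\aleph_1$ and $\sigma(2^{\omega_1})\subset M$, and set $X^\ast=\Sigma(2^{\omega_1})\cap M$. The $\omega$-covering property is precisely what is needed: any countable $C\subset X^\ast$ lies inside some countable $B\in M$, and since $H(\mu)$ models that $B$ has an accumulation point in $\Sigma(2^{\omega_1})$, elementarity puts such a point in $M$, hence in $X^\ast$. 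Consequently $X^\ast$ is countably compact, has cardinality $\aleph_1$ (it contains the $\aleph_1$-sized dense set $\sigma(2^{\omega_1})$), is crowded, and retains the feature that the closure of any somewhere dense subset has nonempty interior.

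Next I would transplant the doubling of Lemma \ref{lemexample} onto $X^\ast$, forming $Z^\ast=X^\ast\cup(X^\ast)'$ with the analogous neighbourhood system $U\cup F(U)\setminus F(N)$, where $U$ is open in $X^\ast$ and $N$ is nowhere dense. The proof that $Z^\ast$ is not weakly Whyburn then runs exactly as in Claim 2: the set $(X^\ast)'$ is non-closed, and any $B\subset(X^\ast)'$ with $\overline{B}\setminus(X^\ast)'\neq\emptyset$ has somewhere dense preimage, so its closure absorbs a nonempty open subset of $X^\ast$, forcing $|\overline{B}\setminus B|\ge\aleph_0$ and ruling out almost closedness. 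Hausdorffness is immediate. Here one should note that, by Theorem \ref{mainwhyburn}, whenever $\aleph_1<2^{\aleph_0}$ no countably compact Urysohn space of this size can fail weak Whyburn; hence the regularity enjoyed by the space of Lemma \ref{lemexample} must be given up, and the final space will be Hausdorff but not Urysohn.

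The main obstacle is exactly the tension just exposed: countable compactness of the doubled space. A countably infinite $C\subset(X^\ast)'$ whose preimage $F^{-1}(C)$ is nowhere dense can be annihilated by taking $N=F^{-1}(C)$, so as it stands such a $C$ remains closed discrete, and the plain doubling is not countably compact. The variation resolves this by supplying limit points for these problematic countable sets \emph{non-Urysohnly}: each such $C$ is glued to a pair of points of $X^\ast$ that cannot be separated by closed neighbourhoods, using the accumulation points furnished by countable compactness of $X^\ast$ together with the elementary-submodel machinery to choose the limits coherently. The crux is to verify that these two demands are compatible, namely that every countable subset of $Z^\ast$ acquires a limit point while no infinite subset of the witness $(X^\ast)'$ ever becomes almost closed, the enforced non-uniqueness of limits being precisely what both guarantees the failure of the Urysohn property and protects the failure of weak Whyburn. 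Once this verification is carried out, $Z^\ast$ is a countably compact Hausdorff space of cardinality $\aleph_1$ that is not weakly Whyburn, and for $\aleph_1<2^{\aleph_0}$ it shows that the Urysohn hypothesis of Theorem \ref{mainwhyburn} is essential.
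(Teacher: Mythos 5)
Your proposal has two genuine gaps, one of which is fatal to the very first step. The claim that $X^\ast=\Sigma(2^{\omega_1})\cap M$ is countably compact does not follow from the argument you give: the $\omega$-covering property yields a countable $B\in M$ with $C\subset B$, and elementarity then gives an accumulation point \emph{of $B$} inside $M$, but this point need not be an accumulation point of the subset $C$, so the conclusion is a non sequitur. Worse, the statement itself is false exactly in the case of interest, namely when $\aleph_1<2^{\aleph_0}$ (under CH the theorem already follows from the size-continuum example in the paper, so this is the only case that matters). Indeed, pick any countably infinite $T\in M$, $T\subset\omega_1$; then $K_T=\{f\in 2^{\omega_1}: f^{-1}(1)\subset T\}$ is a compact metrizable subspace of $\Sigma(2^{\omega_1})$, closed in it, and $K_T\cap M$ is a dense subset of $K_T$ of size at most $\aleph_1<2^{\aleph_0}=|K_T|$, hence a proper dense, non-closed, therefore non-countably-compact subspace of a metric space. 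A countable subset of $K_T\cap M$ witnessing this has all of its accumulation points inside $K_T$ (as $K_T$ is closed), hence none in $X^\ast$. So trimming a countably compact space by a small elementary submodel destroys countable compactness here; this tool cannot play the role you assign to it.

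The second gap is that the actual heart of the construction is never carried out: after correctly observing that the plain doubling $Z^\ast$ is not countably compact (countable subsets of $(X^\ast)'$ with nowhere dense preimage stay closed discrete), you describe the repair --- gluing each such set to a pair of closed-neighbourhood-inseparable points of $X^\ast$ --- only as a goal, and you explicitly defer its verification (``The crux is to verify\dots'', ``Once this verification is carried out\dots''). But that verification \emph{is} the theorem: one must define the topology, check Hausdorffness, and check that \emph{every} infinite subset of \emph{every} glued countable set keeps at least two points in its closure-difference, which requires a delicate coherent recursion in the style of the constructions in \cite{BvDMW}, and is structurally awkward because your new limit points lie inside $X^\ast$, precisely the set toward which convergence must be forbidden. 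The paper avoids both problems with a much simpler device: keep $Z$ from Lemma \ref{lemexample} unchanged, identify it with the successor ordinals of $\omega_1$, and declare $U\subset\omega_1$ open iff $U$ is open in the order topology and $U\cap Z$ is $\rho$-open. This topology is coarser than the order topology, so increasing sequences converge to their suprema, giving sequential compactness for free; Hausdorffness follows from the weak $P$-property of $Z$; and the added limit points are the \emph{limit ordinals}, which lie outside $Z$ and can simply be absorbed into the witness set ($X'$ together with all limit ordinals), since points of $X$ have neighbourhoods contained in $Z$ and Claim 2 of Lemma \ref{lemexample} blocks convergence of subsets of $X'$ to points of $X$. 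You should either adopt that route or supply the missing recursion; as written, the proposal does not constitute a proof.
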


\begin{proof}
Let $Y=\omega_1$ and identify the set of all successor ordinals with the space $Z$ defined in Lemma $\ref{lemexample}$. We define a topology $\tau$ on $Y$ by declaring a set $U \subset Y$ to be open if $U$ is open in the order topology and $U \cap Z$ is open in $(Z, \rho)$.

\noindent {\bf Claim 1}. $(Y, \tau)$ is sequentially compact.

\begin{proof}[Proof of Claim 1]
 Let $S \subset Y$ be a countable infinite set. Pick $x_n \in S$ such that $x_{n-1}<x_n$, for every $n \geq 1$. Then $\{x_n: n < \omega \}$ converges to $\sup \{x_n: n < \omega \}$ in the order topology. But then this is true for the coarser topology of $(Y, \tau)$ as well.
\renewcommand{\qedsymbol}{$\triangle$}
\end{proof}

\noindent {\bf Claim 2}: $(Y, \tau)$ is a Hausdorff space.

\begin{proof}[Proof of Claim 2] Let $x, y \in Y$ be distinct points. Let $I$ and $J$ be disjoint intervals in the order topology such that $x \in I$ and $y \in J$. Then $I \cap Z$ and $J \cap Z$ are disjoint countable subsets of $Z$. Since countable sets are closed in $Z$ we can find disjoint sets $U$ and $V$,open in $Z$, such that $I \cap Z \subset U$ and $J \cap Z \subset V$. So $I \cup U$ and $J \cup V$ are disjoint open subsets of $Y$ which separate $x$ and $y$.
\renewcommand{\qedsymbol}{$\triangle$}
\end{proof}

\end{proof}

Also, the cardinality of the continuum is the smallest possible in Theorem $\ref{mainwhyburn}$, as the following well-known example shows (details are included for the reader's convenience).

\begin{example}
A countably compact regular space of cardinality continuum which is not weakly Whyburn.
\end{example}

\begin{proof}
Consider the \v Cech-Stone compactification of the integers $\beta \omega$ and set $A_0=\omega$.

Let $\beta < \omega_1$ and suppose we have constructed subsets $\{A_\alpha: \alpha < \beta\}$ of $\beta \omega$ such that $|A_\alpha| \leq 2^{\aleph_0}$, for every $\alpha < \beta$. For every $A \in [\bigcup_{\alpha < \beta} A]^\omega$, choose an accumulation point $p_A \in \beta \omega$ and set $A_\beta=\{p_A : A \in [\bigcup_{\alpha < \beta} A_\alpha]^\omega \}$. Then $\bigcup \{A_\alpha: \alpha < \omega_1\}$ is a countably compact space without convergent sequences. But every countably compact weakly Whyburn space contains a convergent sequence. It suffices to note that the closure of every countable almost closed set is a compact countable (and hence second-countable) space.
\end{proof}

There are consistent examples of non-weakly Whyburn spaces of cardinality continuum that are even compact. For example, Juh\'asz and Szentmikl\'ossy construct in \cite{JS} a compact non-pseudoradial space of cardinality continuum. Since every compact weakly Whyburn space is pseudoradial, their example cannot be weakly Whyburn. However, we don't know a ZFC example of a compact non-weakly Whyburn space of cardinality continuum.

\section{Which weakly Whyburn spaces are pseudoradial?}

The following three theorems summarize what has been known so far about the relationship between the weak Whyburn property and pseudoradiality.

\begin{theorem} (Bella, \cite{B})
Every compact weakly Whyburn space is pseudoradial.
\end{theorem}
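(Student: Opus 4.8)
The plan is to apply the weak Whyburn property just once, in order to replace the given non-closed set by an almost closed set with a one-point remainder, and then to build a convergent transfinite sequence inside that remainder by a direct local-base recursion whose engine is the coincidence of character and pseudocharacter at points of compact Hausdorff spaces.

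First I would fix a non-closed set $A \subset X$. Since $X$ is weakly Whyburn, there is an almost closed set $B \subset A$ converging to some point $x \notin A$, so that $\overline{B} = B \cup \{x\}$; as $x \in \overline{B} \subset \overline{A}$, we have $x \in \overline{A} \setminus A$. It therefore suffices to produce a transfinite sequence lying in $B$ (hence in $A$) that converges to $x$. The advantage of this reduction is that the problem now takes place entirely inside $\overline{B}$, which is a closed subspace of $X$, hence compact Hausdorff, in which $x$ is non-isolated and $\overline{B} \setminus \{x\} = B$.

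Next I would invoke the well-known fact that character and pseudocharacter coincide at a point of a compact Hausdorff space, so that $\kappa := \chi(x, \overline{B}) = \psi(x, \overline{B})$, and fix a neighbourhood base $\{U_\gamma : \gamma < \kappa\}$ at $x$ in $\overline{B}$. The heart of the construction is that no initial segment of this base can already isolate $x$: for each $\alpha < \kappa$ the family $\{U_\gamma : \gamma \leq \alpha\}$ has fewer than $\kappa$ members (since $|\alpha| < \kappa$), so, because $\psi(x, \overline{B}) = \kappa$, the set $\bigcap_{\gamma \leq \alpha} U_\gamma$ is strictly larger than $\{x\}$. As this intersection is contained in $\overline{B} = B \cup \{x\}$, I may choose $y_\alpha \in B \cap \bigcap_{\gamma \leq \alpha} U_\gamma$. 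To finish I would check convergence: given any neighbourhood $U$ of $x$, pick $\gamma_0 < \kappa$ with $U_{\gamma_0} \subset U$; then for every $\alpha > \gamma_0$ we have $y_\alpha \in \bigcap_{\gamma \leq \alpha} U_\gamma \subset U_{\gamma_0} \subset U$, so the tail $\{y_\alpha : \alpha > \gamma_0\}$ lies in $U$. Convergence in the subspace $\overline{B}$ yields convergence in $X$, and since $B \subset A$ this exhibits a transfinite sequence in $A$ converging to $x \in \overline{A} \setminus A$, which is exactly pseudoradiality.

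The step I expect to be the crux — and the only place where compactness is used beyond the passage to the closed subspace $\overline{B}$ — is the equality $\chi(x, \overline{B}) = \psi(x, \overline{B})$: it is what guarantees that the recursion never stalls, i.e.\ that an intersection of fewer than $\kappa$ basic neighbourhoods always still meets $B$ rather than collapsing to $\{x\}$. It is worth stressing that the weak Whyburn hypothesis enters exactly once and is indispensable precisely there, since compactness by itself does not supply an almost closed set with a one-point remainder (for example $\beta\omega$ admits no nontrivial such set); thus the reduction to a co-singleton remainder is exactly the content contributed by weak Whyburn, after which the transfinite sequence comes for free.
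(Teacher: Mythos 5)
Your proof is correct. The paper itself does not prove this theorem --- it is quoted from Bella's paper \cite{B} --- but your argument is exactly the method this paper uses for its own generalizations: the reduction ``non-closed $A$ $\to$ almost closed $B\subset A$ with $\overline{B}\setminus A=\{x\}$ $\to$ transfinite sequence in $\overline{B}\setminus\{x\}\subset A$ converging to $x$'' is the one in the Proposition on initially $\kappa$-compact spaces and in Theorem \ref{charomega2}, and your local-base recursion is literally Lemma \ref{lempseudo} (whose hypothesis $\psi(x,\overline{B})=\chi(x,\overline{B})$ you verify via the standard fact that pseudocharacter equals character in compact Hausdorff spaces). One quibble with your closing aside: it is not true that $\beta\omega$ admits no nontrivial almost closed set --- for any $p\in\beta\omega\setminus\omega$ the set $\beta\omega\setminus\{p\}$ is dense with $\overline{\beta\omega\setminus\{p\}}\setminus(\beta\omega\setminus\{p\})=\{p\}$; the correct witness that $\beta\omega$ is not weakly Whyburn is that no $B\subset\omega$ is almost closed, since $\overline{B}\setminus B$ is a copy of $\omega^*$ for infinite $B\subset\omega$. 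This does not affect the proof, which is sound as written.
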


\begin{theorem} \label{bcstheorem} \cite{BCS}
Every Lindel\"of weakly Whyburn $P$-space $X$ such that $\psi(X) < \aleph_\omega$ is pseudoradial.
\end{theorem}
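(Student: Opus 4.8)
The plan is to combine the weak Whyburn property (to manufacture an almost closed set) with the special behaviour of the neighbourhood filter in a Lindel\"of $P$-space (to upgrade that almost closed set to a genuine convergent transfinite sequence). First I would fix a non-closed $A \subseteq X$ and apply the weak Whyburn property to obtain an almost closed $B \subseteq A$ with $\overline{B} \setminus B = \{x\}$, so that $x \in \overline{A} \setminus A$. Since pseudoradiality asks only for a convergent transfinite sequence inside $A$ landing on \emph{some} point of $\overline{A} \setminus A$, it suffices to produce a transfinite sequence contained in $B$ converging to this $x$. I would then pass to the closed subspace $Y = \overline{B}$, which is again a Lindel\"of $P$-space, in which $B = Y \setminus \{x\}$ is dense and $x$ is non-isolated, and $\psi(x,Y) \le \psi(X) = \aleph_n$ for some $n < \omega$; here $n \ge 1$, since any point of countable pseudocharacter in a $P$-space is isolated. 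As a preliminary I would record that $Y$ is regular: given disjoint $x$ and a closed $C$, Hausdorffness produces for each $y \in C$ disjoint open sets around $y$ and $x$, a countable subcover of $C$ selects countably many neighbourhoods of $x$, and their intersection is open (a $P$-space is closed under countable intersections) and misses $C$.

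The heart of the argument, and the step I expect to be the main obstacle, is to show that although we control only the \emph{pseudocharacter} of $x$, its neighbourhood filter still has a tractable base. Starting from a pseudobase $\{G_\xi : \xi < \omega_n\}$ with $\bigcap_\xi G_\xi = \{x\}$, I would use regularity to shrink each $G_\xi$ so that $\bigcap_\xi \overline{G_\xi} = \{x\}$, and then establish the key Lemma: \emph{the family $\{\bigcap_{\xi \in S} G_\xi : S \in [\omega_n]^{\le \omega}\}$ is a neighbourhood base at $x$.} Given an open $U \ni x$, the complement $C = Y \setminus U$ is closed, hence Lindel\"of, and $\{Y \setminus \overline{G_\xi} : \xi < \omega_n\}$ is an open cover of $C$ because $\bigcap_\xi \overline{G_\xi} = \{x\}$; a countable subcover indexed by some $S \in [\omega_n]^{\le\omega}$ yields $\bigcap_{\xi \in S} \overline{G_\xi} \subseteq U$, and then $\bigcap_{\xi \in S} G_\xi$ is an open neighbourhood of $x$ contained in $U$, again using closure under countable intersections. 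This is precisely where the three hypotheses (Lindel\"of, regular, $P$) act together, and it is exactly what fails for $\beta\omega$.

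With such a base in hand I would build the convergent sequence by a rank construction. For $b \in B$ set $\eta(b) = \min\{\xi < \omega_n : b \notin G_\xi\}$, which is well defined since $\bigcap_\xi G_\xi = \{x\}$, and well-order $B$ by increasing value of $\eta$. For a basic neighbourhood $V_S = \bigcap_{\xi \in S} G_\xi$ with $S$ countable, the hypothesis $\psi(X) < \aleph_\omega$ is used crucially: it forces $\mathrm{cf}(\aleph_n) > \omega$, whence $\sup S < \omega_n$, and every $b$ with $\eta(b) > \sup S$ satisfies $b \in G_\xi$ for all $\xi \in S$, i.e. $b \in V_S$. This set is a final segment of the well-order. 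To see it is a genuine, nonempty tail I would note that $\bigcap_{\zeta < \delta} G_\zeta \neq \{x\}$ for every $\delta < \omega_n$ (otherwise $\{G_\zeta : \zeta < \delta\}$ would be a pseudobase of size $< \aleph_n$, contradicting $\psi(x,Y) = \aleph_n$), so for each $\delta$ some $b \in B$ has $\eta(b) \ge \delta$ and nonempty levels occur cofinally in $\omega_n$.

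Putting these together, every basic neighbourhood $V_S$ contains a tail of the enumeration, and since the $V_S$ form a neighbourhood base, the enumeration converges to $x$. This yields the required convergent transfinite sequence inside $B \subseteq A$ terminating at a point of $\overline{A} \setminus A$, so $X$ is pseudoradial. I would emphasize that the only genuinely delicate point is the neighbourhood-base Lemma, which bridges the gap between pseudocharacter and the honest (countably directed) character of $x$; once that is in place, the rank construction and the cofinality remark $\mathrm{cf}(\aleph_n) > \omega$ make the convergence essentially automatic, and it is this cofinality requirement that pins down the bound $\psi(X) < \aleph_\omega$.
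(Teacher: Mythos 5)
Your argument is correct, but there is nothing in this paper to check it against line by line: Theorem \ref{bcstheorem} is stated here only with a citation to \cite{BCS}, and no proof is reproduced. Judged on its own merits, your proof works. The two load-bearing steps both hold: (i) a Hausdorff Lindel\"of $P$-space is regular (your countable-subcover argument is the standard one), so the minimal pseudobase at $x$ in $Y=\overline{B}=B\cup\{x\}$ can be shrunk so that the closures intersect exactly in $\{x\}$; (ii) Lindel\"ofness of $Y\setminus U$ together with closure of $P$-spaces under countable intersections makes the family $\{\bigcap_{\xi\in S}G_\xi : S\in[\omega_n]^{\leq\omega}\}$ a genuine local base at $x$. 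After that, the rank function $\eta$ and the regularity of each $\aleph_n$ (this is exactly where $\psi(X)<\aleph_\omega$ enters) give the convergent enumeration, with minimality of the pseudobase supplying nonempty levels cofinally. Two cosmetic repairs: the cardinal you index by should be $\psi(x,Y)$ itself, not $\psi(X)$, since the nonempty-level argument uses minimality of the pseudobase at $x$ in $Y$; and your enumeration of $B$ has some ordinal order type $\lambda$, whereas the paper's convergence notion is phrased for sequences of cardinal length, so one should pass to a cofinal subsequence of length $\mathrm{cf}(\lambda)$ (your tail argument shows $\lambda$ must be a limit ordinal, so this costs nothing).

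Comparing with what the paper does prove: the closest analogues are Lemma \ref{lemchar} and Theorem \ref{charomega2}, where Lindel\"ofness is weakened to countable extent. There the author can convert pseudocharacter into character only when $\psi(x,\overline{B})=\omega_1$ --- because initial segments of an $\omega_1$-indexed pseudobase are countable intersections, hence open, yielding a decreasing local base --- and must fall back on Lemma \ref{lempseudo} in the case $\psi=\chi$, which caps that technique at character $\omega_2$. Your key lemma exploits full Lindel\"ofness to bypass this dichotomy entirely and treats all $\aleph_n$ uniformly, with no induction and no case split; that uniformity is precisely what the Lindel\"of hypothesis buys over countable extent, and its failure under mere countable extent is why Question \ref{questext} remains open in the paper.
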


\begin{theorem} (Alas, Madriz-Mendoza and Wilson, \cite{AMW})
Every weakly Whyburn k-space is pseudoradial.
\end{theorem}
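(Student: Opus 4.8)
The plan is to combine the weak Whyburn property, which hands us a convergent ``almost closed'' set, with the $k$-space property, which lets us locate the limit inside a compact piece, and then to isolate a self-contained lemma asserting that in a compact Hausdorff space every non-isolated point is the limit of a transfinite sequence.

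First I would fix a non-closed set $A \subseteq X$ and use that $X$ is weakly Whyburn to obtain an almost closed $B \subseteq A$ with $\overline{B} \setminus B = \{x\}$ and $x \notin A$. Since $\overline{B} = B \cup \{x\}$ is closed in $X$, it is itself a $k$-space (a closed subspace of a $k$-space is a $k$-space), and $B$ fails to be closed in $\overline{B}$. Hence there is a compact $K \subseteq \overline{B}$ for which $B \cap K$ is not closed in $K$. Because $K \subseteq B \cup \{x\}$, the only possible point of $\overline{B \cap K}^{K} \setminus (B \cap K)$ is $x$; thus $x \in K$, $B \cap K = K \setminus \{x\}$, and $x$ is a non-isolated point of the compact space $K$ lying in the closure of $K \setminus \{x\} \subseteq B \subseteq A$. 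A transfinite sequence in $K \setminus \{x\}$ converging to $x$ would then be a transfinite sequence in $A$ converging to the point $x \in \overline{A} \setminus A$, which is exactly what pseudoradiality of $A$ requires.

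The heart of the argument is therefore the following claim about compact spaces: if $K$ is compact Hausdorff and $x \in K$ is not isolated, then some transfinite sequence in $K \setminus \{x\}$ converges to $x$. To prove it I would set $\lambda = \chi(x, K)$ and use that $\psi(x, K) = \chi(x, K)$ in a compact Hausdorff space; by regularity I may fix a base $\{U_\alpha : \alpha < \lambda\}$ of open neighbourhoods of $x$ whose closures again form a neighbourhood base. The key move is to pass to the \emph{decreasing chain of closed sets} $C_\alpha = \bigcap_{\gamma \leq \alpha} \overline{U_\gamma}$: each $C_\alpha$ is closed and, being an intersection of fewer than $\lambda = \psi(x,K)$ neighbourhoods of $x$, strictly contains $\{x\}$, so I may choose $y_\alpha \in C_\alpha \setminus \{x\}$. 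Given any neighbourhood $V$ of $x$, there is $\alpha_0$ with $\overline{U_{\alpha_0}} \subseteq V$, and then $y_\alpha \in C_\alpha \subseteq \overline{U_{\alpha_0}} \subseteq V$ for every $\alpha \geq \alpha_0$; hence $\{y_\alpha : \alpha < \lambda\}$ converges to $x$.

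The main obstacle is precisely this lemma, and the subtlety is that the naive strategies fail: a cofinal well-ordered chain of \emph{open} neighbourhoods of $x$ need not exist, and replacing the neighbourhood filter by a convergent net does not produce an ordinal-indexed sequence (this is exactly why compactness alone does not yield radiality, as the absence of nontrivial convergent sequences in $\beta\omega$ illustrates). What rescues the construction is working with \emph{closed} neighbourhoods: their intersections stay closed, so the chain $\{C_\alpha : \alpha < \lambda\}$ remains available at limit stages, while the equality $\psi(x,K) = \chi(x,K)$ guarantees that no initial intersection collapses to $\{x\}$ before stage $\lambda$. Once the lemma is in place, the reduction via the $k$-space property in the previous paragraph finishes the proof, since $A$ was an arbitrary non-closed set.
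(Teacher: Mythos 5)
Your argument is correct, and a preliminary remark is in order: the paper does not prove this theorem at all --- it is quoted from \cite{AMW} as known background --- so there is no internal proof to compare against; your proposal stands as a self-contained argument. The reduction is exactly the right use of the two hypotheses: pass to $\overline{B}=B\cup\{x\}$, which is closed and hence again a k-space; extract a compact $K\subseteq\overline{B}$ witnessing that $B$ is not closed; observe that $K\subseteq B\cup\{x\}$ forces $B\cap K=K\setminus\{x\}$, so the witness point is $x$ itself, $x$ is non-isolated in $K$, and any transfinite sequence in $K\setminus\{x\}$ converging to $x$ lies in $A$ and converges outside $A$. Every step checks out, including the standard facts you invoke (closed subspaces of k-spaces are k-spaces; $\psi(y,K)=\chi(y,K)$ in compact Hausdorff spaces). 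Moreover, your ``heart of the argument'' is, in substance, the paper's own Lemma \ref{lempseudo} specialized to the compact case: the equality of pseudocharacter and character is exactly the hypothesis of that lemma, and your construction of the points $y_\alpha$ is the same induction. One small correction: your closing claim that working with \emph{closed} neighbourhoods is what rescues the construction overstates the point. The same induction runs with the open sets $U_\gamma$ themselves --- the intersection $\bigcap_{\gamma\leq\alpha}U_\gamma$ need not be open, but it still properly contains $\{x\}$ because it is an intersection of fewer than $\psi(x,K)$ open neighbourhoods of $x$, and convergence follows because the $U_\alpha$ form a base; this is precisely how Lemma \ref{lempseudo} is proved in the paper. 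What is genuinely essential, and what your proof correctly isolates, is the equality $\psi=\chi$ in compact Hausdorff spaces together with the fact that the weak Whyburn property shrinks $\overline{B}\setminus B$ to a single point, so that the compact witness $K$ is trapped inside $A\cup\{x\}$.
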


Angelo Bella asked us in private communication whether compactness can be weakened to countable compactness and regularity in the first theorem. We show that the answer is negative. We then address the problem of weakening the Lindel\"of property to countable extent in the second theorem.

\begin{definition}
A sequence $\{x_\alpha: \alpha < \kappa\} \subset X$ is called free if for every ordinal $\beta < \kappa$, $\overline{\{x_\alpha: \alpha < \beta \}} \cap \overline{\{x_\alpha: \alpha \geq \beta \}}=\emptyset$. The \emph{freeness of $X$} ($F(X)$) is defined as the supremum of the cardinalities of the free sequences in $X$.
\end{definition}

The following lemma is well-known, but we include a proof for the reader's convenience.

\begin{lemma}
For every space $X$, we have $F(X) \leq L(X) \cdot t(X)$.
\end{lemma}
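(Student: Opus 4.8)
The plan is to argue by contradiction. Set $\kappa = L(X) \cdot t(X)$ and suppose there is a free sequence of length $\kappa^+$, say $\{x_\alpha : \alpha < \kappa^+\}$. It suffices to rule this out: any free sequence of cardinality $> \kappa$ has length at least $\kappa^+$, and its initial segment $\{x_\alpha : \alpha < \kappa^+\}$ is again free (passing to an initial segment only shrinks the tails, so the defining disjointness of closures is preserved), so the non-existence of a free sequence of length $\kappa^+$ yields $F(X) \leq \kappa$. The goal is then to manufacture a single point that simultaneously lies in the closure of some tail and in the closure of the complementary initial segment, in direct violation of freeness.

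First I would locate an accumulation point common to all tails. For $\beta < \kappa^+$ write $F_\beta = \{x_\alpha : \alpha \geq \beta\}$; as $\beta$ increases these sets decrease, so the closures $\overline{F_\beta}$ form a decreasing chain of closed sets. I claim $\bigcap_{\beta < \kappa^+} \overline{F_\beta} \neq \emptyset$. If the intersection were empty, then $\{X \setminus \overline{F_\beta} : \beta < \kappa^+\}$ would be an open cover of $X$, and since $L(X) \leq \kappa$ it would admit a subcover indexed by some $B$ with $|B| \leq \kappa$. Because $\kappa^+$ is regular and $|B| < \kappa^+$, the set $B$ is bounded by some $\gamma < \kappa^+$, and then the union of the subcover is contained in $X \setminus \overline{F_\gamma}$ (using $\overline{F_\gamma} \subseteq \overline{F_\beta}$ for $\beta \leq \gamma$). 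This forces $\overline{F_\gamma} = \emptyset$, contradicting $x_\gamma \in F_\gamma \subseteq \overline{F_\gamma}$. Fix a point $x \in \bigcap_{\beta < \kappa^+} \overline{F_\beta}$.

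The second step invokes tightness to expose $x$ on the other side. Since $x \in \overline{F_0} = \overline{\{x_\alpha : \alpha < \kappa^+\}}$ and $t(X) \leq \kappa$, there is a subset $T$ of the sequence with $|T| \leq \kappa$ and $x \in \overline{T}$. The indices of $T$ form a set of size at most $\kappa < \kappa^+$, hence bounded by some $\delta < \kappa^+$ by regularity of $\kappa^+$; therefore $T \subseteq \{x_\alpha : \alpha < \delta+1\}$ and so $x \in \overline{\{x_\alpha : \alpha < \delta+1\}}$. On the other hand $x \in \overline{F_{\delta+1}} = \overline{\{x_\alpha : \alpha \geq \delta+1\}}$ by the choice of $x$. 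Thus $x$ belongs to both $\overline{\{x_\alpha : \alpha < \delta+1\}}$ and $\overline{\{x_\alpha : \alpha \geq \delta+1\}}$, contradicting freeness at the index $\beta = \delta+1$. This contradiction completes the argument and gives $F(X) \leq \kappa = L(X) \cdot t(X)$.

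The one delicate point, and the step I expect to need the most care, is the first: showing that the decreasing chain of tail-closures has nonempty intersection. This is precisely where both the Lindel\"of property and the regularity of $\kappa^+$ enter, and one must check that the size-$\leq\kappa$ subcover is genuinely bounded below $\kappa^+$ so that a single $\overline{F_\gamma}$ absorbs it. The tightness step and the final clash with freeness are then routine.
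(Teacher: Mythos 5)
Your proof is correct, and its overall skeleton matches the paper's: assume a free sequence of length $\kappa^+$, use the Lindel\"of degree to produce a point $x$ lying in the closure of every tail, use tightness to trap $x$ in the closure of a bounded initial segment, and contradict freeness. The one place where you diverge is the Lindel\"of step. The paper invokes the standard fact that $L(X) \leq \kappa$ gives every set of regular cardinality $\kappa^+$ a \emph{complete} accumulation point $x$; such a point automatically lies in every tail closure $\overline{F_\beta}$, since each neighbourhood of $x$ meets the sequence in $\kappa^+$ many points while an initial segment contributes fewer than $\kappa^+$ of them. You instead prove directly, by a covering argument, that the decreasing chain of tail closures has nonempty intersection: if it were empty, the complements would form an open cover, a subcover of size $\leq \kappa$ would have index set bounded by some $\gamma < \kappa^+$ (regularity of $\kappa^+$), and then $\overline{F_\gamma}$ would have to be empty, which is absurd. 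This is exactly the fragment of the complete-accumulation-point lemma that the argument actually needs, so your version is a self-contained and slightly more elementary rendering of the same proof: it buys independence from that cited fact at the cost of a few extra lines. Your preliminary reduction --- that it suffices to exclude free sequences of length exactly $\kappa^+$, because an initial segment of a free sequence is again free --- is also correct, and is left implicit in the paper.
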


\begin{proof}
Let $\kappa = L(X) \cdot t(X)$ and suppose by contradiction that $X$ contains a free sequence $F=\{x_\alpha: \alpha < \kappa^+\}$ of cardinality $\kappa^+$. Since $L(X) \leq \kappa$, the set $F$ has a complete accumulation point $x$. Now, by $t(X) \leq \kappa$, there is a set $C \subset F$ such that $|C| \leq \kappa$ and $x \in \overline{C}$. Let $\gamma<\kappa^+$ be an ordinal such that $C \subset \{x_\alpha: \alpha < \gamma\}$. Since $F$ is a free sequence, we have $x \notin \overline{\{x_\alpha: \alpha \geq \gamma\}}$ and that contradicts the fact that $x$ is a complete accumulation point of $F$.
\end{proof}

\begin{lemma} \label{bellalemma}
(A. Bella, \cite{BFree}) Let $X$ be a pseudoradial regular space. Then $t(X) \leq F(X)$.
\end{lemma}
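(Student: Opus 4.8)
The plan is to argue by contradiction, manufacturing a free sequence that is too long to exist. Write $\kappa=F(X)$ and suppose, toward a contradiction, that $t(X)>\kappa$. Then there are a set $A\subseteq X$ and a point $x\in\overline{A}$ witnessing the failure of tightness $\leq\kappa$, i.e. $x\notin\overline{C}$ for every $C\in[A]^{\leq\kappa}$. The goal is to construct a free sequence $\langle x_\alpha:\alpha<\kappa^+\rangle\subseteq A$, which will contradict $F(X)=\kappa$.

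I would run a transfinite recursion that produces, alongside the points $x_\alpha$, open neighbourhoods $U_\alpha\ni x$ arranged so that (i) $\overline{U_\alpha}\cap\overline{\{x_\beta:\beta<\alpha\}}=\emptyset$, and (ii) $x_\gamma\in U_\beta$ whenever $\beta\leq\gamma$. Property (i) is exactly what regularity provides: since $\{x_\beta:\beta<\alpha\}$ has size at most $\kappa$, the point $x$ lies outside its (closed) closure, so $x$ and that set can be separated by open sets with disjoint closures. The reason for insisting on (i) and (ii) is that together they already force freeness: writing $I_\beta$ and $T_\beta$ for the initial and final segments determined by $\beta$, condition (ii) gives $T_\beta\subseteq U_\beta$, hence $\overline{T_\beta}\subseteq\overline{U_\beta}$, while (i) gives $\overline{U_\beta}\cap\overline{I_\beta}=\emptyset$; combining these, $\overline{I_\beta}\cap\overline{T_\beta}=\emptyset$, which is precisely the free sequence condition.

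To keep the recursion alive I would carry the auxiliary \emph{survival condition} $x\in\overline{A\cap G_\alpha}$, where $G_\alpha=\bigcap_{\beta<\alpha}U_\beta$; this is what lets me place the next point $x_\alpha\in A\cap G_\alpha\cap U_\alpha$ inside all previous neighbourhoods, securing (ii). At successor stages this is routine: once $U_\alpha$ is chosen as in (i), the openness of $U_\alpha$ upgrades $x\in\overline{A\cap G_\alpha}$ to $x\in\overline{A\cap G_{\alpha+1}}$, so survival propagates and the point can be selected. The crux --- and the step I expect to be the genuine obstacle --- is the limit stage, where $G_\beta=\bigcap_{\delta<\beta}U_\delta$ is merely an intersection of up to $\kappa$ open sets rather than an open set, and survival $x\in\overline{A\cap G_\beta}$ is no longer automatic; indeed it can fail outright, since a point of high tightness may sit in the closure of a union of $\leq\kappa$ sets none of whose individual closures contains it. This is exactly the place where Arhangel'skii's classical theorem for compact spaces invokes compactness, and here I would substitute pseudoradiality together with regularity: because the obstructing traces inside $A\cap G_\beta$ are non-closed, pseudoradiality furnishes convergent transfinite sequences that can be threaded through $G_\beta$, allowing the neighbourhoods to be chosen so that survival is preserved across limits. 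Once the construction survives all $\kappa^+$ steps, the sequence $\langle x_\alpha:\alpha<\kappa^+\rangle$ is free of length $\kappa^+>\kappa=F(X)$, the desired contradiction.
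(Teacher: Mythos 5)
There is a genuine gap, and it sits exactly where you flagged it: the limit stage. Your recursion carries the survival condition $x\in\overline{A\cap G_\beta}$, but at limit $\beta$ you offer only the assertion that pseudoradiality ``furnishes convergent transfinite sequences that can be threaded through $G_\beta$.'' This is not an argument: if survival has failed at $\beta$ (i.e.\ $x\notin\overline{A\cap G_\beta}$), no appeal to pseudoradiality \emph{at stage $\beta$} can restore it, since the neighbourhoods $U_\delta$, $\delta<\beta$, were fixed at earlier stages and cannot be revised; moreover you do not say to which non-closed set pseudoradiality is applied, why the resulting sequence should converge to $x$ rather than to some other point, or why it should be eventually inside each $U_\delta$. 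As written, the recursion simply dies at the first limit ordinal where survival fails, and nothing in the proposal rules that out.

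The actual proof (Bella's, which the paper cites rather than reproduces) uses pseudoradiality in a structurally different place: once, at the very beginning, and applied not to $A$ but to a $\kappa$-closed set. From $t(X)>\kappa$ one first produces a set $A^*$ that is $\kappa$-closed (meaning $\overline{B}\subseteq A^*$ for every $B\in[A^*]^{\leq\kappa}$) but not closed; this is the standard characterization of tightness. Pseudoradiality applied to $A^*$ yields a transfinite sequence $\{y_\alpha:\alpha<\lambda\}\subseteq A^*$ converging to some $z\notin A^*$, and one may take $\lambda$ regular by passing to a cofinal subsequence. The role of $\kappa$-closedness is to force $\lambda>\kappa$: a convergent sequence of length $\leq\kappa$ would put $z$ in the closure of a $\leq\kappa$-sized subset of $A^*$, hence in $A^*$. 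Now run exactly your extraction, but along this convergent sequence instead of along $A$: at stage $\xi<\kappa^+$ the initial segment has size $\leq\kappa$, so its closure stays inside $A^*$ and misses $z$; regularity of the space gives $U_\xi\ni z$ with $\overline{U_\xi}$ disjoint from that closure; and --- this is what replaces your survival condition --- each $U_\eta$ with $\eta\leq\xi$ contains a tail of the convergent sequence, and since there are at most $\kappa<\lambda=\mathrm{cf}(\lambda)$ of them, a single tail lies inside all of them, from which the next point is chosen. Limit stages are now trivial because convergence, not an inductive hypothesis, guarantees that points far enough along lie in all previously chosen neighbourhoods. Your conditions (i) and (ii), and your observation that they force freeness, are exactly right and constitute the closing step of the real proof; what is missing is the reduction that makes them attainable: pass to a $\kappa$-closed non-closed set, apply pseudoradiality once, and build the free sequence inside the resulting convergent sequence.
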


\begin{theorem} \label{angeloquest}
There is a countably compact regular weakly Whyburn non-pseudoradial space.
\end{theorem}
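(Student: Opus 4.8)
The plan is to exploit Lemma \ref{bellalemma} in its contrapositive form: since a pseudoradial regular space $X$ must satisfy $t(X) \leq F(X)$, it suffices to produce a countably compact regular \emph{weakly Whyburn} space in which tightness strictly exceeds freeness, for instance with $F(X)=\aleph_0$ and $t(X)=\aleph_1$. Any such space is automatically non-pseudoradial, so the entire difficulty is pushed into a single construction that must reconcile four competing demands at once: countable compactness, regularity, high tightness, and small freeness. I would organize the proof as a construction of $X$ on an underlying set of size $\aleph_1$ followed by four verifications.

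For the \emph{tightness} requirement I would single out a distinguished point $p$ together with a set $A$ of size $\aleph_1$ arranged so that $p \in \overline{A}$ while $p \notin \overline{B}$ for every countable $B \subset A$; concretely, I would let the trace on $A$ of the neighbourhood filter at $p$ be a ``fan-like'' filter, indexed by functions into $\omega$, so that any countable subset of $A$ can be thinned away from $p$ column by column. This forces $t(X) \geq \aleph_1$. The same fan-like structure should be tuned so that \emph{no} transfinite sequence inside $A$ converges to $p$: because a neighbourhood of $p$ may shrink each column independently, a sequence escapes some neighbourhood unless it eventually collapses into a single column, and those degenerate cases I would rule out by the placement of the remaining points. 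At this stage $A$ is already a non-closed set exhibiting non-radiality at its limit $p$, which is the qualitative heart of the example.

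The \emph{countable compactness} is the crux and the step I expect to fight hardest with, precisely because countable compactness forces every countable subset to accumulate, and accumulation points are exactly what tend to manufacture convergent sequences and long free sequences, i.e. the phenomena I must suppress near $p$. My approach would be to erect $X$ over a skeleton that is already countably compact (a copy of $\omega_1$, or a countably compact set assembled by transfinitely adjoining accumulation points to countable subsets as in the Example following Theorem \ref{mainwhyburn}), and to glue the fan over $p$ so that the limits forced on countable subsets of $A$ fall back into the skeleton rather than producing a chain converging to $p$. Maintaining \emph{regularity}, not merely the Urysohn property, will require choosing the basic neighbourhoods of $p$ and of the skeleton points to have disjoint closures, and this is where most of the routine but delicate neighbourhood bookkeeping will occur.

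Finally I would certify $F(X)=\aleph_0$ and the weak Whyburn property. For the freeness bound I would argue from the construction itself, analysing how a putative free sequence interacts with the fan and the skeleton: an uncountable free sequence would have to reconcile disjoint initial and tail closures with the fact that its accumulation behaviour is concentrated at $p$ and within the countably compact skeleton, and I would show this is impossible, so every free sequence is countable; together with $t(X)=\aleph_1$ this gives $F(X)<t(X)$, hence non-pseudoradiality through Lemma \ref{bellalemma}. For the weak Whyburn property I would keep $|X|=\aleph_1$ and, whenever $\aleph_1<2^{\aleph_0}$, invoke Theorem \ref{mainwhyburn} directly; to obtain a genuine ZFC example I would instead verify weak Whyburn by hand, using that $X$ is almost discrete away from $p$ and the skeleton, so that every non-closed set either meets the skeleton in a set already accumulating there or contains a subset almost converging to $p$ extracted from the fan structure. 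The persistent obstacle throughout is the joint demand that $p$ carry uncountable tightness yet admit no convergent transfinite sequence \emph{while} $X$ remains countably compact, since these pull the accumulation structure in opposite directions.
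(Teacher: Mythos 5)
Your overall logic coincides with the paper's: by Lemma \ref{bellalemma} a regular pseudoradial space satisfies $t(X)\leq F(X)$, so it suffices to exhibit a countably compact regular weakly Whyburn space in which tightness exceeds freeness. But the proposal stops exactly where the proof has to begin: no space is actually constructed. Each of the four required properties is addressed by a declaration of intent (``I would arrange'', ``I would rule out'', ``I would verify by hand''), and the step you yourself call the crux --- making the fan point $p$ have uncountable tightness while the space stays countably compact and regular and all long free sequences are suppressed --- is precisely the content that is missing. As it stands, this is a strategy outline, not a proof, and it cannot be checked for correctness because the object it talks about is never defined.

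Beyond incompleteness, the specific numerical target you chose ($t(X)=\aleph_1$, $F(X)=\aleph_0$, $|X|=\aleph_1$) works against you. Your proposed countably compact skeleton, a copy of $\omega_1$, itself carries a free sequence of length $\omega_1$: the successor ordinals $\{\alpha+1:\alpha<\omega_1\}$, since for each $\beta$ the initial segment has closure inside $[0,\beta]$ while the tail lies in the clopen set $[\beta+1,\omega_1)$. Freeness is computed in the whole space, so this does not instantly force $F(X)\geq\aleph_1$, but it means your gluing must deliberately destroy the freeness of \emph{every} uncountable sequence coming from the skeleton by providing common accumulation points of initial segments and tails --- a heavy constraint your sketch never addresses, and one that pulls directly against regularity and against the ``almost discrete away from $p$'' structure you want for the weak Whyburn verification. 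The paper sidesteps this whole difficulty by going one cardinal up: it takes $Y=\Sigma(2^{\omega_2})\cup\{p\}$, where $t(p,Y)=\aleph_2$ is automatic, weak Whyburn-ness follows from the Fr\'echet--Urysohn property of the $\Sigma$-product, countable compactness and regularity are standard, and the freeness bound need not be $\aleph_0$: it is enough that $F(Y)\leq\aleph_1<\aleph_2$, which follows from $F\leq L\cdot t$ once one proves $L(\Sigma(2^{\omega_2}))=\aleph_1$ --- the elementary submodel argument that constitutes the real work of the paper's proof. Note finally that your fallback for the weak Whyburn property, invoking Theorem \ref{mainwhyburn}, requires $\aleph_1<2^{\aleph_0}$ and so would at best yield a consistent example, whereas the theorem asserts (and the paper produces) a ZFC one; your ``by hand'' alternative is again only a promise.
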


\begin{proof}
Let $X=\Sigma(2^{\omega_2})=\{x \in 2^{\omega_2}: |x^{-1}(1)| \leq \aleph_0 \}$ and let $p \in 2^{\omega_2}$ be the point defined by $p(\alpha)=1$, for every $\alpha < \omega_2$. We will prove that $Y=X \cup \{p\}$ with the topology inherited from $2^{\omega_2}$ is the required example.

 It is well known that the space $X$ is Fr\'echet-Urysohn (see \cite{E}), and that implies both that $X$ is weakly Whyburn and that $X$ has countable tightness.

\noindent {\bf Claim}. $L(X) = \aleph_1$.

\begin{proof}[Proof of Claim]
Let $\mathcal{U}$ be an open cover of $X$. Without loss of generality we can assume that for every $U \in \mathcal{U}$, there is a finite partial function $\sigma: \omega_2 \to 2$ such that $U=\{x \in 2^{\omega_2}: \sigma \subset x \}$. The domain of $\sigma$ will then be called the \emph{support of $U$} and we will write $supp(U)=dom(\sigma)$.

Let $\theta$ be a large enough regular cardinal and $M$ be an $\omega$-covering elementary submodel of $H(\theta)$ such that $X, \mathcal{U}, \omega_2 \in M$ and $|M|=\aleph_1$.

We claim that $\mathcal{U} \cap M$ covers $X$. Indeed, let $x \in X$ be any point and let $A \in M$ be a countable set such that $x^{-1}(1) \cap M \subset A$. 

Let $Z=\{y \in X: (\forall \alpha \in \omega_2 \setminus A)(y(\alpha)=0) \}$. Then $Z \in M$ and $Z$ is a compact subspace of $X$. So there is a finite subfamily $\mathcal{V} \in M$ of $\mathcal{U}$ such that $Z \subset \bigcup \mathcal{V}$. Since $\mathcal{V}$ is finite, we have $\mathcal{V} \subset M$. It then follows that $\mathcal{U} \cap M$ covers $Z$. 

Let $a$ be the point such that $a(\alpha)=x(\alpha)$ for all $\alpha \in M \cap \omega_2$ and $a(\alpha)=0$ for all $\alpha \in \omega_2 \setminus M$. The fact that $x^{-1}(1) \cap M \subset A$ implies that $a \in Z$ and hence there is $U \in \mathcal{U} \cap M$ such that $a \in U$. Note that $supp(U)$ is a finite element of $M$ and hence $supp(U) \subset M$. But since $x$ and $a$ coincide on $M$ we then have that $x \in U$ as well, as we wanted.

This proves $L(X) \leq \aleph_1$, but we can't have $L(X)=\aleph_0$ because $X$ is countably compact non-compact. Hence $L(X)=\aleph_1$.

\renewcommand{\qedsymbol}{$\triangle$}
\end{proof}

Suppose by contradiction that $Y$ is pseudoradial. Note that $t(Y)=\aleph_2$, hence by Lemma $\ref{bellalemma}$, $Y$ contains a free sequence $F$ of size $\omega_2$. So $F \setminus \{p\}$ is a free sequence of size $\omega_2$ in $X$. But $F(X) \leq L(X) t(X)=\aleph_1 \cdot \aleph_0=\aleph_1$, and that is a contradiction.
\end{proof}

Angelo Bella noted that Example $\ref{angeloquest}$ has the best possible character. As a matter of fact, every initially $\kappa$-compact weakly Whyburn regular space of character at most $\kappa^+$ is pseudoradial (recall that a space is called \emph{initially $\kappa$-compact} if every open cover of cardinality $\leq \kappa$ has a finite subcover). Before proceeding to the proof of this, we prove a simple lemma about convergent sequences in general spaces that is behind many results of this kind.

\begin{lemma} \label{lempseudo}
Let $X$ be any space and let $x \in X$ be a point such that $\psi(x,X)=\chi(x,X)$. Then $X \setminus \{x\}$ contains a transfinite sequence converging to $x$.
\end{lemma}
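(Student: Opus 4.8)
The plan is to use the two hypotheses for different purposes: the character assumption fixes a local base at $x$ of the smallest possible size $\kappa := \chi(x,X) = \psi(x,X)$, while the pseudocharacter assumption lets me select, along an enumeration of that base, points $\neq x$ that are forced into every tail of the base. So first I would fix a local base $\{V_\alpha : \alpha < \kappa\}$ at $x$. The key observation, and the only place where the equality $\psi(x,X) = \chi(x,X)$ is actually used, is that no subfamily of $\{V_\alpha : \alpha < \kappa\}$ of cardinality strictly less than $\kappa$ can have intersection equal to $\{x\}$: if some such subfamily did, it would witness $\psi(x,X) < \kappa$, contradicting $\psi(x,X) = \kappa$.

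Next I would carry out the selection. For each $\alpha < \kappa$ set $W_\alpha = \bigcap_{\beta < \alpha} V_\beta$. Since $\kappa$ is a cardinal, the family $\{V_\beta : \beta < \alpha\}$ has cardinality $|\alpha| < \kappa$, so by the key observation $W_\alpha \neq \{x\}$; as each $V_\beta$ is a neighbourhood of $x$ we have $x \in W_\alpha$, and hence $W_\alpha \setminus \{x\} \neq \emptyset$. Pick $x_\alpha \in W_\alpha \setminus \{x\}$. This produces a transfinite sequence $\{x_\alpha : \alpha < \kappa\} \subset X \setminus \{x\}$.

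It then remains only to verify convergence. Given any open neighbourhood $U$ of $x$, since $\{V_\alpha : \alpha < \kappa\}$ is a local base there is $\gamma < \kappa$ with $V_\gamma \subset U$. Then for every $\alpha$ with $\gamma < \alpha < \kappa$ we have $x_\alpha \in W_\alpha \subset V_\gamma \subset U$, so $\{x_\alpha : \alpha > \gamma\} \subset U$, which is exactly convergence of the sequence to $x$. (If $x$ is non-isolated then $\psi(x,X) \geq \aleph_0$, so $\kappa$ is a limit ordinal and these tails are genuinely nonempty; the isolated case is degenerate and can be set aside.)

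The only real content is the key observation that isolates where $\psi = \chi$ is used; everything else is bookkeeping, so the main obstacle here is conceptual rather than technical. The point is to recognize that equality of pseudocharacter and character is precisely the condition preventing a short initial segment of the chosen base from already pseudo-isolating $x$, which is what makes the point selection possible all the way up to $\kappa$. Were $\psi(x,X) < \chi(x,X)$, one could only select $x_\alpha$ for $\alpha < \psi(x,X)$, and the resulting sequence would be too short to be forced into the smaller base elements $V_\gamma$ with $\gamma \geq \psi(x,X)$, so it would fail to converge in any meaningful sense.
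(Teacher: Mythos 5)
Your proof is correct and follows essentially the same route as the paper's: enumerate a local base $\{V_\alpha:\alpha<\kappa\}$ of size $\kappa=\chi(x,X)$, pick points in the initial-segment intersections (nonempty off $\{x\}$ precisely because $\psi(x,X)=\kappa$), and deduce convergence from the base property. The only cosmetic difference is that the paper additionally excludes the previously chosen points at each stage, making the sequence injective, which is immaterial for convergence.
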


\begin{proof}
Suppose $\chi(x,X)=\kappa$ and let $\{U_\alpha: \alpha < \kappa \}$ enumerate a local base at $x$. Inductively choose points $x_\beta \in X$ such that, for every $\beta < \kappa$, $x_\beta \in \bigcap \{U_\alpha: \alpha \leq \beta \} \setminus (\{x_\alpha: \alpha < \beta \} \cup \{x\})$. This can be done because $\psi(x,X)=\kappa$. Note that $\{x_\alpha: \alpha < \kappa \}$ converges to $x$.
\end{proof}

\begin{proposition}
Let $X$ be an initially $\kappa$-compact weakly Whyburn regular space such that $\chi(X) \leq \kappa^+$. Then $X$ is pseudoradial.
\end{proposition}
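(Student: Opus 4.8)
The plan is to reduce pseudoradiality to a single application of Lemma \ref{lempseudo} on a suitable closed subspace. So let $A \subseteq X$ be non-closed; since $X$ is weakly Whyburn there is an almost closed $B \subseteq A$ converging to some point $x \notin A$, that is, $\overline{B} \setminus B = \{x\}$. Set $W = \overline{B}$. Then $W = B \cup \{x\}$, so $W \setminus \{x\} = B$, and a transfinite sequence in $W \setminus \{x\}$ converging to $x$ is exactly a transfinite sequence in $A$ converging to the point $x \in \overline{A}\setminus A$ that pseudoradiality requires. By Lemma \ref{lempseudo} it therefore suffices to prove that $\psi(x, W) = \chi(x, W)$.

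The main step, and the place where all the hypotheses are used, is the following claim: in an initially $\kappa$-compact regular space, every point $z$ with $\psi(z, \cdot) \leq \kappa$ satisfies $\chi(z, \cdot) = \psi(z, \cdot)$. I would prove this by the usual closure/finite-intersection argument. Fix open sets $\{V_\alpha : \alpha < \mu\}$ with $\mu = \psi(z,\cdot) \leq \kappa$ and $\bigcap_\alpha V_\alpha = \{z\}$ and, using regularity, shrink them to open sets $W_\alpha$ with $z \in W_\alpha \subseteq \overline{W_\alpha} \subseteq V_\alpha$, so that $\bigcap_\alpha \overline{W_\alpha} = \{z\}$ as well. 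Given any open $U \ni z$, the closed sets $\overline{W_\alpha} \setminus U$ (for $\alpha < \mu$) have empty intersection and number at most $\kappa$, so initial $\kappa$-compactness yields a finite subfamily with empty intersection, i.e.\ $\bigcap_{i \leq n} W_{\alpha_i} \subseteq \bigcap_{i\le n}\overline{W_{\alpha_i}} \subseteq U$. Hence the finite intersections of $\{W_\alpha : \alpha < \mu\}$ form a local base at $z$ of size $\leq \mu$, giving $\chi(z, \cdot) \leq \mu = \psi(z, \cdot) \leq \chi(z,\cdot)$.

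Applying this to $W = \overline{B}$, which is a closed and hence initially $\kappa$-compact regular subspace of $X$, I would finish by a dichotomy on $\psi(x, W)$ (note that $x$ is non-isolated in $W$, so both cardinal functions are infinite). If $\psi(x, W) \leq \kappa$, the claim gives $\psi(x, W) = \chi(x, W)$ directly. If $\psi(x, W) > \kappa$, then $\kappa^+ \leq \psi(x, W) \leq \chi(x, W) \leq \chi(X) \leq \kappa^+$, so again $\psi(x, W) = \chi(x, W) = \kappa^+$; this is exactly where the character bound $\chi(X) \leq \kappa^+$ is indispensable, since it is what prevents pseudocharacter and character from separating above $\kappa$. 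In either case $\psi(x,W) = \chi(x,W)$, and Lemma \ref{lempseudo} produces the desired convergent transfinite sequence. I expect the only genuine obstacle to be the claim in the second paragraph; everything else is bookkeeping, and the one subtlety to watch is that initial $\kappa$-compactness must be invoked for a family of size $\leq \kappa$, which is guaranteed precisely by the assumption $\psi(x,W) \leq \kappa$ in the first horn of the dichotomy.
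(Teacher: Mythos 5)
Your proposal is correct, and it reorganizes the argument in a genuinely different way from the paper. The paper uses the same dichotomy on $\psi(x,\overline{B})$, and its second case ($\psi(x,\overline{B})=\kappa^+$) is identical to yours, but in the first case ($\psi(x,\overline{B})\leq\kappa$) it does \emph{not} pass through the equality $\chi=\psi$: instead it fixes a \emph{minimal-sized} family $\{U_\alpha:\alpha<\lambda\}$ of open subsets of $\overline{B}$ with $\bigcap_{\alpha<\lambda}\overline{U_\alpha}=\{x\}$, inductively picks $x_\alpha\in\bigcap\{\overline{U_\beta}:\beta\leq\alpha\}\setminus(\{x_\beta:\beta<\alpha\}\cup\{x\})$ (minimality of $\lambda$ is what keeps these choices alive), and verifies convergence directly, using initial $\kappa$-compactness to extract, for each neighbourhood $V$ of $x$, a finite $F\subset\lambda$ with $\bigcap\{\overline{U_\alpha}:\alpha\in F\}\subset V$. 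That finite-intersection step is exactly the one in your claim, so the two proofs share the same engine; the difference is packaging. Your version isolates a clean, reusable fact --- in an initially $\kappa$-compact regular space, $\psi(z,\cdot)\leq\kappa$ forces $\chi(z,\cdot)=\psi(z,\cdot)$ --- so that both horns of the dichotomy terminate uniformly in Lemma \ref{lempseudo}, and it makes completely explicit where the hypothesis $\chi(X)\leq\kappa^+$ enters. It also sidesteps the transfinite point-selection (delegating it to the lemma), whereas the paper's direct construction silently relies on the minimality of $\lambda$ to justify that the inductive choice of $x_\alpha$ is always possible --- a point the paper does not spell out. The paper's route, in exchange, produces the convergent sequence in one pass without counting finite intersections of the pseudobase. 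Both are correct; yours is arguably the more modular exposition.
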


\begin{proof}
Let $A \subset X$ be a non-closed set and let $B$ be a subset of $A$ such that $\overline{B} \setminus A=\{x\}$. If $\psi(x, \overline{B}) \leq \kappa$, then let $\{U_\alpha: \alpha < \lambda \}$ be a minimal sized family of open subsets of $\overline{B}$ such that $\bigcap_{\alpha < \lambda} \overline{U_\alpha}=\{x\}$. Inductively choose points $x_\alpha \in \bigcap \{\overline{U_\beta}: \beta \leq \alpha \} \setminus (\{x_\beta: \beta < \alpha \} \cup \{x\})$, for every $\alpha < \lambda$. Since $\overline{B} \setminus A=\{x\}$, we have $x_\alpha \in A$, for every $\alpha<\lambda$. Note now that $\{x_\alpha: \alpha < \lambda \}$ converges to $x$. Indeed, let $V$ be an open neighbourhood of $x$ in $\overline{B}$. We have $\{x\} = \bigcap \{\overline{U_\alpha}: \alpha < \lambda\} \subset V$. As $\lambda \leq \kappa$, by initial $\kappa$-compactness of $\overline{B}$, there is a finite subset $F$ of $\lambda$ such that $\bigcap \{\overline{U_\alpha}: \alpha \in F \} \subset V$. If we let $\gamma=\max(F)$, we see that $\{x_\alpha: \alpha \geq \gamma \} \subset V$, and hence $\{x_\alpha: \alpha < \lambda \}$ converges to $x$.

If $\psi(x, \overline{B}) =\kappa^+$, then $\psi(x, \overline{B})=\chi(x, \overline{B})$ and hence we can use Lemma $\ref{lempseudo}$.
\end{proof}

However, we don't know an answer to the following question when $\kappa>\aleph_0$.

\begin{question}
Is there, for every cardinal $\kappa$, an initially $\kappa$-compact weakly Whyburn space of character at most $\kappa^{++}$ which is not pseudoradial?
\end{question}

We are greatly indebted to Ofelia Alas for her considerable simplification of the proof of Theorem $\ref{charomega2}$.

\begin{lemma} \label{lemchar}
Let $X$ be a regular $P$-space of countable extent and let $x$ be a point such that $\psi(x, X)=\omega_1$. Then $\chi(x,X)=\omega_1$. 
\end{lemma}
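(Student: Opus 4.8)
The plan is to prove $\chi(x,X)\le\omega_1$; since $\psi(x,X)\le\chi(x,X)$ holds in every space, this will give $\chi(x,X)=\omega_1$. Starting from a family $\{U_\alpha:\alpha<\omega_1\}$ of open neighbourhoods of $x$ with $\bigcap_{\alpha<\omega_1}U_\alpha=\{x\}$, I would first build a \emph{decreasing} family of open neighbourhoods $\{V_\alpha:\alpha<\omega_1\}$ of $x$ by transfinite recursion, arranging that $V_\alpha\subseteq U_\alpha$ and, at successor steps, $\overline{V_{\alpha+1}}\subseteq V_\alpha\cap U_{\alpha+1}$ (using regularity to slip in a closure), while at a limit stage $\lambda$ I set $V_\lambda=U_\lambda\cap\bigcap_{\beta<\lambda}V_\beta$. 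The point of the $P$-space hypothesis is that each such limit-stage set is a \emph{countable} intersection of open sets, hence open. This yields a decreasing family with $\overline{V_{\alpha+1}}\subseteq V_\alpha$ for every $\alpha$, so that $\bigcap_{\alpha}V_\alpha=\bigcap_{\alpha}\overline{V_\alpha}=\{x\}$.

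The crux is the claim that $\{V_\alpha:\alpha<\omega_1\}$ is a local base at $x$. I would argue by contradiction: suppose some open $W\ni x$ contains no $V_\alpha$, pick $x_\alpha\in V_\alpha\setminus W$ for each $\alpha$, and set $D=\{x_\alpha:\alpha<\omega_1\}$. The goal is to exhibit $D$ as an uncountable closed discrete subset, contradicting countable extent. The recurring tool is that in a $T_1$ $P$-space every countable set is closed and discrete: for a point outside a countable set $C$, intersecting the countably many neighbourhoods witnessing $T_1$-separation produces, by the $P$-space property, an open set around the point that misses $C$, and discreteness follows in the same way.

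With that tool the three required properties of $D$ fall out. First, $x\notin\overline D$, since $W$ is a neighbourhood of $x$ disjoint from $D$. Second, $D$ is closed and discrete: given $y\neq x$, as $\bigcap_\alpha\overline{V_\alpha}=\{x\}$ there is $\gamma$ with $y\notin\overline{V_\gamma}$, and the open set $X\setminus\overline{V_\gamma}$ meets $D$ only inside the countable set $\{x_\beta:\beta<\gamma\}$ (any $x_\beta$ with $\beta\ge\gamma$ lies in $V_\beta\subseteq V_\gamma\subseteq\overline{V_\gamma}$); since this countable set is closed and discrete, $y$ cannot be a limit point of $D$ when $y\notin D$ (giving closedness), and when $y=x_\alpha\in D$ it is isolated in $D$ (giving discreteness). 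Third, $D$ is uncountable: were $D$ countable, each of its points would leave the decreasing family $\{V_\alpha\}$ by some stage, so by a single $\gamma^*<\omega_1$ we would have $V_{\gamma^*}\cap D=\emptyset$, contradicting $x_{\gamma^*}\in V_{\gamma^*}\cap D$.

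I expect the main obstacle to be bookkeeping rather than a conceptual leap: the recursion must be run so that \emph{both} $\bigcap_\alpha V_\alpha=\{x\}$ and $\bigcap_\alpha\overline{V_\alpha}=\{x\}$ hold, which is precisely where regularity (to insert the closures) and the $P$-space property (to keep limit stages open) are used together. One must also be careful that $D$, a priori a set indexed by $\omega_1$ possibly with repetitions, genuinely has cardinality $\omega_1$; the uniform-escape argument in the third step is what secures this and is the point most likely to conceal an error if treated too quickly.
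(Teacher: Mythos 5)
Your proof is correct, and it follows the same overall strategy as the paper's: produce a decreasing $\omega_1$-sized family of neighbourhoods of $x$ whose closures meet only in $\{x\}$, suppose it is not a local base at $x$, pick escape points $x_\alpha\in V_\alpha\setminus W$, and derive a contradiction with countable extent using the fact that countable sets in a $T_1$ $P$-space are closed and discrete. The two supporting steps are carried out differently, though. For the construction, the paper invokes the fact that every regular $P$-space is zero-dimensional and simply takes a decreasing family of \emph{clopen} sets with intersection $\{x\}$, so closedness of the members comes for free; your regularity-plus-recursion argument (closures inserted at successors, the $P$-space property keeping countable-limit stages open) is more laborious but avoids quoting zero-dimensionality, so it is marginally more self-contained. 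For the contradiction, the paper argues dually to you: by countable extent the set $\{x_\alpha:\alpha<\omega_1\}$ has an accumulation point $z$; the $P$-space property upgrades $z$ to a complete accumulation point; then $z$ lies in the closure of every tail $\{x_\alpha:\alpha\geq\beta\}\subseteq U_\beta$, hence in every (closed) $U_\beta$, hence $z=x$, contradicting that $W$ is a neighbourhood of $x$ missing all the $x_\alpha$. Your arrangement --- verifying directly that $D$ is closed, discrete and uncountable --- is the contrapositive of the same computation, and it is slightly more scrupulous on one point the paper passes over silently: countable extent gives information only about \emph{uncountable} sets, and your uniform-escape argument that $D$ genuinely has size $\aleph_1$ is exactly what legitimizes the appeal to it (in the paper's version one should note that if $\{x_\alpha:\alpha<\omega_1\}$ were countable as a set, some value would repeat uncountably often and would then lie in $\bigcap_\alpha U_\alpha=\{x\}$, impossible since these points avoid $W$).
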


\begin{proof}
Recall that every regular $P$-space is zero-dimensional. Let $\{U_\alpha: \alpha < \omega_1 \}$ be a decreasing family of clopen sets such that $\bigcap \{U_\alpha: \alpha < \omega_1 \}=\{x\}$. We claim $\{U_\alpha: \alpha < \omega_1 \}$ is actually a local base at $x$. Suppose that this is not the case and let $W$ be an open neighbourhood of $x$ such that $U_\alpha \nsubseteq W$, for every $\alpha < \omega_1$. Use this to choose, for every $\alpha< \omega_1$ a point $x_\alpha \in U_\alpha \setminus W$. Use countable extent to find an accumulation point $z$ of the set $\{x_\alpha: \alpha < \omega_1 \}$. Since every countable subset of $X$ is closed we have that $z$ is actually a complete accumulation point of $\{x_\alpha: \alpha < \omega_1 \}$. Then $z \in \overline{\{x_\alpha: \alpha \geq \beta \}}$ for all $\beta < \kappa$ and hence $z \in \bigcap \{U_\alpha: \alpha < \kappa \}=\{x\}$. It follows that $z=x$, but that is a contradiction, because $W$ is an open neighbourhood of $x$ disjoint from $\{x_\alpha: \alpha < \omega_1 \}$.

\end{proof}

\begin{theorem} \label{charomega2}
Let $X$ be a regular weakly Whyburn $P$-space of countable extent such that $\chi(X) \leq \omega_2$. Then $X$ is pseudoradial.
\end{theorem}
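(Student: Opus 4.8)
The plan is to use the weak Whyburn property to reduce pseudoradiality for an arbitrary non-closed set to a statement about a single accumulation point inside a closed subspace, and then to arrange that the pseudocharacter and character of that point coincide, so that Lemma~\ref{lempseudo} manufactures the desired convergent transfinite sequence. The cardinal bound $\chi(X) \leq \omega_2$ should enter exactly to force this coincidence in the top case, while Lemma~\ref{lemchar} handles the intermediate case.

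First I would take a non-closed set $A \subset X$ and apply the weak Whyburn property to obtain $B \subset A$ with $\overline{B} \setminus A = \{x\}$. Then $x \in \overline{A} \setminus A$, and since $B \subset A$ and $x \notin A$ we have $\overline{B} \setminus \{x\} = \overline{B} \cap A \subset A$; hence any transfinite sequence lying in $\overline{B} \setminus \{x\}$ and converging to $x$ will witness pseudoradiality for $A$ (convergence in the subspace $\overline{B}$ entails convergence in $X$ for a sequence lying in $\overline{B}$). I would therefore pass to the closed subspace $Y = \overline{B}$, observing that $Y$ inherits regularity and the $P$-space property, and that $Y$ has countable extent because a closed discrete subset of the closed subspace $Y$ is closed discrete in $X$, hence countable.

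The core of the argument is to establish $\psi(x,Y) = \chi(x,Y)$ and then invoke Lemma~\ref{lempseudo} with the space $Y$ and the point $x$. Since $x \in \overline{B} \setminus B$, the point $x$ is not isolated in $Y$; as $Y$ is a $P$-space, a countable family of open sets meeting in $\{x\}$ would exhibit $\{x\}$ as a $G_\delta$, hence open, so $x$ would be isolated, a contradiction. Thus $\psi(x,Y) \geq \omega_1$. On the other hand $\chi(x,Y) \leq \chi(x,X) \leq \omega_2$, so $\psi(x,Y) \in \{\omega_1, \omega_2\}$. If $\psi(x,Y) = \omega_2$, then $\omega_2 = \psi(x,Y) \leq \chi(x,Y) \leq \omega_2$ forces $\chi(x,Y) = \psi(x,Y)$ automatically. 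If $\psi(x,Y) = \omega_1$, then Lemma~\ref{lemchar}, applied to the regular $P$-space $Y$ of countable extent, yields $\chi(x,Y) = \omega_1 = \psi(x,Y)$. In either case the hypothesis of Lemma~\ref{lempseudo} holds, producing a transfinite sequence in $Y \setminus \{x\} \subset A$ converging to $x$, as required.

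The genuinely hard input is Lemma~\ref{lemchar}, which powers the $\psi = \omega_1$ case; granting it, the remaining delicate points are purely structural: checking that $Y = \overline{B}$ retains countable extent, and recognizing that the bound $\chi(X) \leq \omega_2$ is precisely what lets the $\psi = \omega_2$ case collapse without an analogue of Lemma~\ref{lemchar} at $\omega_2$. I expect the main obstacle to be conceptual rather than computational, namely seeing that these two cases together exhaust the possibilities for $\psi(x,Y)$ and that each terminates in the same appeal to Lemma~\ref{lempseudo}; this is also the reason the theorem does not obviously extend past character $\omega_2$.
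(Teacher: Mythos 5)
Your proof is correct and follows essentially the same route as the paper's: apply the weak Whyburn property to get $B \subset A$ with $\overline{B} \setminus A = \{x\}$, handle the case $\psi(x,\overline{B}) = \omega_1$ via Lemma~\ref{lemchar}, let the case $\psi(x,\overline{B}) = \omega_2$ collapse automatically against the bound $\chi(X) \leq \omega_2$, and finish both cases with a convergent transfinite sequence in $\overline{B} \setminus \{x\} \subset A$ (the paper constructs it directly from a decreasing local base in the $\omega_1$ case and cites Lemma~\ref{lempseudo} in the other, while you invoke Lemma~\ref{lempseudo} uniformly). Your write-up is in fact slightly more explicit than the paper's on two points it leaves implicit: that the two cases are exhaustive because the $P$-space property forces $\psi(x,\overline{B}) \geq \omega_1$ at the non-isolated point $x$, and that the closed subspace $\overline{B}$ inherits regularity, the $P$-space property, and countable extent.
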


\begin{proof}
Let $A \subset X$ be a non-closed set and let $B \subset A$ be a set such that $\overline{B} \setminus A=\{x\}$, for some $x \in \overline{A} \setminus A$.

If $\psi(x, \overline{B})=\omega_1$ then, applying Lemma $\ref{lemchar}$ we see that $\chi(x, \overline{B})=\omega_1$. Let $\{U_\alpha: \alpha < \omega_1\}$ be a decreasing local base for $x$ in $\overline{B}$. For every $\alpha < \omega_1$, let $x_\alpha$ be a point in $U_\alpha \cap \overline{B} \setminus \{x\}$. Since $\overline{B} \setminus A=\{x\}$, we actually have $x_\alpha \in A$ and hence $\{x_\alpha: \alpha< \omega_1 \}$ is a sequence inside $A$ converging to $x$:

If $\psi(x, \overline{B})=\omega_2=\chi(x, \overline{B})$ then Lemma $\ref{lempseudo}$ guarantees the existence of a sequence $\{x_\alpha: \alpha < \omega_2 \} \subset \overline{B} \setminus \{x\}$ converging to $x$. Since $\overline{B} \setminus A=\{x\}$ we actually have that $\{x_\alpha: \alpha < \omega_2 \} \subset A$ and hence we are done.
\end{proof}

The following simple example shows that Theorem $\ref{charomega2}$ is not covered by Theorem $\ref{bcstheorem}$.

\begin{example}
A weakly Whyburn $P$-space $X$ of countable extent which is not Lindel\"of.
\end{example}

\begin{proof}
Let $X=\{\alpha < \omega_2: cf(\alpha)>\aleph_0 \}$ with the topology inherited from the order topology on $\omega_2$. Then $X$ is clearly a $P$-space. To see why $X$ has countable extent, note that for every $A \in [X]^{\omega_1}$ there is $B \subset A$ such that $otp(B)=\omega_1$ and hence $sup(B)$ belongs to $X$ and is an accumulation point of $A$. To see why $X$ is weakly Whyburn, let $A$ be a non-closed set. Let $x=\min(\overline{A} \setminus A)$. If we set $B:=A \cap [0, x]$, we see that $\overline{B} \setminus A=\{x\}$.  Finally, the open cover consisting of all initial segments shows that $X$ is not Lindel\"of.
\end{proof}

The following natural question is left open.

\begin{question} \label{questext}
Is there a regular weakly Whyburn $P$-space $X$ of countable extent such that $\psi(X) < \aleph_\omega$ and $X$ is not pseudoradial?
\end{question}

In view of Lemma $\ref{lempseudo}$, an example answering positively to Question $\ref{questext}$ must also be a positive answer to the following question.

\begin{question}
Is there a regular $P$-space $X$ of countable extent with a point $x \in X$ such that $\psi(x,X) < \chi(x,X)$ and $\psi(X) < \aleph_\omega$?
\end{question}

\section{Acknowledgements}

The author is grateful to FAPESP for financial support through postdoctoral grant 2013/14640-1, \emph{Discrete sets and cardinal invariants in set-theoretic topology}. The author is grateful to Ofelia Alas, Angelo Bella and L\'ucia Junqueira for helpful discussion that led to an improvement of the exposition of the paper.

\end{document}